\documentclass[12pt]{amsart}

\usepackage{amscd, amsmath, amssymb, amsthm, amsfonts, amsxtra, 
enumerate, latexsym, tabularx, setspace}
\usepackage[all]{xy}
\usepackage[dvips]{graphicx,color}

\def\FF{{\mathbb{F}}}

\def\PP{{\mathbb{P}}}
\def\Q{{\mathbb{Q}}}
\def\R{{\mathbb{R}}}

\def\ZZ{{\mathbb{Z}}}

\def\ch{{\mathrm{char\;}}}

\def\Der{{\mathrm{Der}}}

\def\Frac{{\mathrm{Frac}}}

\def\Hom{{\mathrm{Hom}}}
\def\hom{{\mathcal{H}om}}

\def\Mod{{\mathrm{mod\; }}}
\def\ord{{\mathrm{ord}}}

\def\Proj{{\mathrm{Proj\; }}}

\def\Sing{{\mathrm{Sing}}}

\def\Spec{{\mathrm{Spec}}}

\def\der{\partial}

\theoremstyle{plain}
\newtheorem{thm}{Theorem}[section]
\newtheorem*{theorem}{Theorem}

\newtheorem{lmm}[thm]{Lemma}

\theoremstyle{definition}

\newtheorem{exa}[thm]{Example}

\theoremstyle{remark}
\newtheorem{rem}{Remark}[section]
\newtheorem*{prob}{Problem}

\title{Classification of globally F-regular $F$-sandwiches of Hirzebruch surfaces}
\author{Tadakazu Sawada}
\address{Department of General education, Fukushima National College of Technology, 
30 Aza-Nagao, Kamiarakawa, Iwaki-shi, Fukushima 970-8034, Japan}
\email{sawada@fukushima-nct.ac.jp}

\begin{document}

\begin{abstract}
Let $X$ be a smooth variety over an algebraically closed field of positive characteristic. An 
$F$-sandwich of $X$ is a normal variety $Y$ through which the relative Frobenius morphism 
of $X$ factors as $F:X\rightarrow Y \rightarrow X$. In this paper, we give a classification of 
globally F-regular $F$-sandwiches of Hirzebruch surfaces.
\end{abstract}

\maketitle
\markboth{Tadakazu Sawada}{Classification of globally F-regular 
$F$-sandwiches of Hirzebruch surfaces}
\section*{Introduction}
Let $X$ be a smooth variety over an algebraically closed field of positive characteristic. 
A Frobenius sandwich of $X$ is a normal variety $Y$ through which the (iterated) relative 
Frobenius morphism of $X$ factors as $F:X\rightarrow Y \rightarrow X$. For a given variety 
$X$, it is natural to ask what kinds of singularities and varieties appear as Frobenius sandwiches 
of $X$. However, it seems hopeless to classify the Frobenius sandwiches explicitly without any 
restriction to those under consideration because of pathological phenomena in positive 
characteristic. For example, there exists a Frobenius sandwich of the projective plane $\PP^2$ 
whose nonsingular model is a surface of general type. (In characteristic $0$, every unirational 
surface is rational.) Taking into account such pathological phenomena, we consider Frobenius 
sandwiches that behave better in the sense of Frobenius splitting, that is, globally F-regular 
Frobenius sandwiches. Global F-regularity is defined via splitting of Frobenius morphisms and has 
remarkably nice properties. Assuming global F-regularity, we expect to exclude pathological cases, 
so that we may hope for comprehensive study of Frobenius sandwiches. 

We consider the following problem: 
\begin{prob}
Given a globally F-regular variety $X$, classify globally F-regular Frobenius sandwiches of $X$.
\end{prob}

We dealt with the simplest case where $X=\PP^2$ in \cite{HS}. We showed that globally F-regular 
$F$-sandwiches of $\PP^2$ of degree $p$ (see Section 1 for the definition of degree) are singular 
toric surfaces and there are $p-1$ isomorphism classes. In this paper, we give a classification of 
globally F-regular $F$-sandwiches of Hirzebruch surfaces of degree $p$. In particular, we see that 
those globally F-regular $F$-sandwiches are toric surfaces and there are $p$ or $p+1$ isomorphism 
classes for each Hirzebruch surface. The following is the main result: 

\begin{theorem}
A globally $F$-regular $F$-sandwich of the Hirzebruch surface $\mathcal{H}_d$ of degree $p$ is 
isomorphic to either one of the toric surfaces $X_{\Sigma_{di}}\, (0 \leq i \leq p)$. 
\end{theorem}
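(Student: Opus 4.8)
The plan is to reduce the classification to a problem about $1$-foliations and then to use global $F$-regularity to force such a foliation to be torus-invariant. Recall that a degree-$p$ $F$-sandwich $\mathcal{H}_d\to Y\to \mathcal{H}_d^{(1)}$ is determined by a saturated rank-one subsheaf $\mathcal{F}\subseteq T_{\mathcal{H}_d}$ that is closed under $p$-th powers (a $p$-closed foliation): the inclusion $\mathcal{F}\subseteq T_{\mathcal{H}_d}$ yields $\mathcal{H}_d\to Y=\mathcal{H}_d/\mathcal{F}$, and composing with $Y\to \mathcal{H}_d/T_{\mathcal{H}_d}=\mathcal{H}_d^{(1)}$ recovers the relative Frobenius. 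So the first step is to write $T_{\mathcal{H}_d}$ down explicitly from the relative Euler sequence of the ruled surface $\mathcal{H}_d\to\PP^1$ and to parametrize the saturated sub-line-bundles $\mathcal{L}=\mathcal{O}(D_{\mathcal{F}})\hookrightarrow T_{\mathcal{H}_d}$, each corresponding to a section of $T_{\mathcal{H}_d}\otimes\mathcal{L}^{-1}$ vanishing in codimension $\ge 2$, subject to the $p$-closedness condition $\der^{p}\in\mathcal{L}$ on a local generator $\der$.

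The second and decisive step is to show that global $F$-regularity of $Y$ forces $\mathcal{F}$ to be torus-invariant. I would use the canonical bundle formula for a foliation quotient, $\pi^{*}K_{Y}\equiv K_{\mathcal{H}_d}+(p-1)\,c_1(\mathcal{F})$ up to the contribution supported on $\Sing \mathcal{F}$, to translate global $F$-regularity into the requirement that $-K_Y$ be big and that the singularities of $Y$ be $F$-regular. Expressed back on $\mathcal{H}_d$, this bounds $c_1(\mathcal{F})$ against the fiber and section classes and constrains the singular scheme of $\mathcal{F}$ to sit at the torus-fixed points with the local structure of a toric quotient singularity. The expectation is that these two constraints together leave no room for a non-invariant admissible foliation, so that every such $\mathcal{F}$ is generated by a torus-invariant vector field and $Y$ is toric. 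This is the main obstacle, and it is exactly where the positive-characteristic pathology present for general $F$-sandwiches must be excluded; the analogous step for $\PP^2$ is carried out in \cite{HS}.

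Once $Y$ is known to be toric, the classification becomes combinatorial. Writing $\mathcal{H}_d=X_{\Sigma}$ with $N\cong\ZZ^{2}$, the relative Frobenius is induced by multiplication by $p$ on $N$, so a toric degree-$p$ sandwich corresponds to an intermediate lattice $pN\subseteq N'\subseteq N$ with $[N:N']=p$. Such lattices are in bijection with the lines of $N/pN\cong\FF_p^{2}$, that is, with $\PP^1(\FF_p)$, giving precisely $p+1$ candidates indexed by $i$ with $0\le i\le p$. For each $i$ I would choose an isomorphism $N'\cong\ZZ^{2}$ and record the primitive generators, with respect to $N'$, of the four rays of $\Sigma$; this yields the transformed fan $\Sigma_{di}$ and hence the list $X_{\Sigma_{di}}$. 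Each $X_{\Sigma_{di}}$ is a complete, hence projective, toric surface and is therefore globally $F$-regular, so that every candidate genuinely occurs.

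It remains to count isomorphism classes. Two of the listed surfaces are isomorphic, $X_{\Sigma_{di}}\cong X_{\Sigma_{dj}}$, precisely when the fans $\Sigma_{di}$ and $\Sigma_{dj}$ agree up to an element of $\mathrm{GL}_2(\ZZ)$, and I would decide this by computing for each $i$ the discrete invariants of $\Sigma_{di}$ — the types of its torus-fixed singularities and the self-intersection data of its invariant curves — directly from the renormalized ray generators. I expect these invariants to separate all $p+1$ fans except for a single coincidence that occurs exactly under a divisibility condition on $d$ modulo $p$, giving $p$ isomorphism classes in that case and $p+1$ otherwise. Combining the foliation reduction of the first two steps with the lattice enumeration and this isomorphism count establishes the theorem.
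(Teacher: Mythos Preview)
Your outline has the right architecture but the decisive step is not actually argued. In your second step you say global $F$-regularity ``bounds $c_1(\mathcal{F})$'' and ``constrains the singular scheme of $\mathcal{F}$ to sit at the torus-fixed points with the local structure of a toric quotient singularity,'' and then write that ``the expectation is that these two constraints together leave no room for a non-invariant admissible foliation.'' That expectation is the whole theorem, and you have not supplied a mechanism for it. Local $F$-regularity of the singularities of $Y$ does \emph{not} by itself force the singular scheme of the foliation to lie at torus-fixed points; that has to be arranged by hand using automorphisms of $\mathcal{H}_d$. What the paper actually does is: use the splitting of $\mathcal{O}_Y\to\pi_*\mathcal{O}_{\mathcal{H}_d}$ to get $H^0(L^{\otimes(p-1)})\neq 0$, hence $H^0(L)\neq 0$ on a Hirzebruch surface; then invoke Lemma~\ref{form_of_del-hir} to obtain the explicit normal form
\[
\delta \sim (a_2x^2+a_1x+a_0)\,\partial_x + (F(x)y - d a_2 x + b)\,y\,\partial_y,\qquad \deg F\le d,
\]
and finally carry out a concrete case analysis on $(a_2,a_1,a_0)$, using coordinate changes on $\mathcal{H}_d$ together with the non-nilpotency criterion of Lemma~\ref{non-nil}, to reduce $\delta$ to $x\partial_x+iy\partial_y$ $(i\in\ZZ/p\ZZ)$ or $\partial_y$. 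None of this machinery (Lemma~\ref{form_of_del-hir}, the case split, the use of Lemma~\ref{non-nil} to eliminate the nilpotent branch in step~(3)) appears in your proposal, and without it the reduction to a torus-invariant foliation is unjustified.

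Two smaller points. First, your lattice/$\PP^1(\FF_p)$ description of the $p+1$ toric candidates is fine and matches the paper's fans $\Sigma_{di}$, but note that the paper does not prove toricity of $Y$ abstractly and then enumerate lattices; it normalizes $\delta$ first and reads off the quotient directly. Second, your guess about the isomorphism count is off: the dichotomy is $d=0$ versus $d\ge 1$, giving $p$ and $p+1$ classes respectively, not a congruence condition on $d$ modulo $p$. For $d\ge 1$ all $p+1$ surfaces $X_{\Sigma_{di}}$ are pairwise non-isomorphic.
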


\noindent Here $X_{\Sigma_{di}}$ stands for the toric surface associated to a fan $\Sigma_{di}$. The 
fans $\Sigma_{di}$ are given by considering the original fan on $N=\ZZ^2$ associated to $\mathcal{H}_d$ 
on an overlattice $N+\ZZ \frac{1}{p}(1, i)\ (0\leq i\leq p)$. Hence we easily see that $X_{\Sigma_{di}}$ 
are globally F-regular $F$-sandwiches of $\mathcal{H}_d$ of degree $p$. Conversely, this theorem says 
that globally F-regular $F$-sandwiches of $\mathcal{H}_d$ of degree $p$ are only those natural ones. 

In Section 1, we review generalities on Frobenius sandwiches and globally F-regular varieties. In Section 2, 
we give the classification. 

\section{Preliminary}
We work over an algebraically closed field $k$ of characteristic $p>0$. Let $X$ be an algebraic variety 
over $k$. The absolute Frobenius morphism $F: X \rightarrow X$ is the identity on the underlying 
topological space of $X$, and the $p$-th power map on the structure sheaf $\mathcal{O}_X$, which we also 
denote by $F: \mathcal{O}_X \rightarrow F_{\ast}\mathcal{O}_X$. Let $X^{(-1)}$ be the base change of $X$ by the absolute 
Frobenius morphism of $\Spec\, k$. The relative Frobenius morphism $F_{\rm rel}: X \rightarrow X^{(-1)}$ 
is defined by the following Cartesian square: 
$$\xymatrix
{X \ar@/^8mm/[rr]^F \ar[r]^{\!\!\!\! F_{\rm rel}} \ar[dr] & X^{(-1)} \ar[r] \ar[d] & X \ar[d] \\
                                                          & \Spec\, k \ar[r]^F     & \Spec\, k
}$$

In what follows, we use these variants of Frobenius morphisms interchangeably. Since we work over the 
algebraically closed field, we need not strictly distinguish these variants. 

{\bf Frobenius sandwiches.} First we review generalities on Frobenius sandwiches. Let $X$ be a smooth 
variety over $k$. A normal variety $Y$ is an {\it $F^e$-sandwich} of $X$ if the $e$-th iterated relative 
Frobenius morphism of $X$ factors as 
$$\xymatrix
{X \ar[rr]^{F^e_{\rm rel}} \ar[rd]_{\pi}  &                  & X^{(-e)} \\
                                          & Y \ar[ru]_{\rho} &
}$$
for some finite $k$-morphisms $\pi : X \rightarrow Y$ and $\rho : Y \rightarrow X^{(-e)}$, which are 
homeomorphisms in the Zariski topology. An {\it $F$-sandwich} will mean an $F^1$-sandwich. We say that 
the Frobenius sandwich $Y$ is of {\it degree $p$} if the degree of the morphism $\pi : X \rightarrow Y$ is $p$. 

By a {\it $1$-foliation} of $X$, we mean a saturated $p$-closed subsheaf $L$ of the tangent bundle $T_X$ 
closed under Lie brackets, where $L$ is said to be $p$-closed if it is closed under $p$-times iterated 
composite of differential operators; see \cite{E}. 

It is known that there are one-to-one correspondences among the followings (see \cite{RS}, \cite{E}, \cite{Hirokado}):
\begin{itemize}
\item $F$-sandwiches of $X$ of degree $p$; 
\item invertible $1$-foliations of $X$; 
\item $p$-closed rational vector fields of $X$ modulo an equivalence $\sim$.
\end{itemize}

\noindent (1) $F$-sandwiches and $1$-foliations: The correspondence is given by
$$Y \mapsto L=\{\delta\in T_X\,|\,\delta(f)=0\text{ for all }f\in \mathcal{O}_Y\}\subset T_X$$
and
$$L \mapsto 
\mathcal{O}_Y=\{f\in\mathcal{O}_X\,|\,\delta(f)=0\text{ for all }\delta\in L\}\subset \mathcal{O}_X.$$
(The inclusion $\mathcal{O}_Y\subset \mathcal{O}_X$ induces the finite morphism $\pi:X\rightarrow Y$.) The well-definedness 
of this correspondence is guaranteed by the Galois correspondence due to Aramova and Avramov \cite{AA}. 

\noindent (2) $F$-sandwiches and rational vector fields: We define an equivalence relation $\sim$ between 
rational vector fields $\delta,\delta' \in \Der_k\, k(X)$ as follows: $\delta \sim {\delta}'$ if and only if there 
exists a non-zero rational function $\alpha \in k(X)$ such that $\delta = \alpha {\delta}'$. Let 
$\{U_i = \Spec\, R_i\}_i$ be an affine open covering of $X$. Given a $p$-closed rational vector field 
$\delta \in \Der_k\, k(X)$, we have a quotient variety $X/{\delta}$ defined by glueing $\Spec\, R_i^{\delta}$, 
where $R_i^{\delta}=\{r\in R\,|\,\delta(r)=0\}$, and a quotient map $\pi_{\delta} : X \rightarrow X/{\delta}$ 
induced from the inclusions $R_i^{\delta} \subset R_i$. Then we easily see that $R^{\delta}_i$ is normal and the 
field extension ${\Frac\, R_i}/{\Frac\, R_i^{\delta}}$ is purely inseparable of degree $p$. This means that 
$X/\delta$ is an $F$-sandwich of degree $p$ with the finite morphism $\pi_{\delta} : X \rightarrow X/{\delta}$ 
through which the Frobenius morphism of $X$ factors. Conversely, if $Y$ is an $F$-sandwich of $X$ of degree 
$p$ with the finite morphism $\pi : X \rightarrow Y$ through which the Frobenius morphism of $X$ factors, 
then there exists a rational vector field $\delta$ such that $\pi = \pi_{\delta}$ and $Y = X/{\delta}$. Indeed, 
there exists a $p$-closed rational vector field $\delta \in \Der_k\, k(X)$ such that $k(X)^{\delta}=k(Y)$ by 
Baer's result (see e.g., \cite{J}), since the field extension $k(X)/k(Y)$ is purely inseparable of degree $p$. Thus 
$\delta$ induces an inclusion $\mathcal{O}_Y \subset \mathcal{O}_{X/{\delta}}$, so that there exists a finite birational morphism 
$X/{\delta} \rightarrow Y$. Since $Y$ is normal, this morphism is an isomorphism.

\noindent (3) $1$-foliations and rational vector fields: A rational vector field $\delta \in \Der_k\, k(X)$ is locally 
expressed as $\alpha \sum f_i \der/\der s_i$, where $s_i$ are local coordinates, $f_i$ are regular functions 
without common factors and $\alpha \in k(X)$. The divisor $\mathrm{div}(\delta)$ associated to $\delta$ is 
defined by glueing the divisors $\mathrm{div}(\alpha)$ on affine open sets. Then $\mathcal{O}_X(\mathrm{div}(\delta))$ 
has a saturated $p$-closed invertible subsheaf structure of $T_X$: 
$$\begin{array}{cccc}
0 \longrightarrow & \mathcal{O}_X(\mathrm{div}(\delta)) & \xrightarrow{\cdot \delta} & T_X \vspace{2mm} \\
                  & h/{\alpha}       & \longmapsto                & h \sum f_i \der/\der s_i
\end{array}$$
where $h$ is a regular function. Then we see that $\delta \mapsto \mathcal{O}_X(\mathrm{div}(\delta))$ gives a 
one-to-one correspondence between $p$-closed rational vector fields modulo the equivalence and invertible 
$1$-foliations.

\medskip Let $X$ be a smooth projective surface over $k$ and $L$ an invertible $1$-foliation of $X$. We have 
an exact sequence
$$0 \longrightarrow L \longrightarrow T_X \longrightarrow I_Z \otimes L' \longrightarrow 0,$$
where $I_Z$ is the defining ideal sheaf of a zero-dimensional subscheme $Z$ and $L'$ is an invertible sheaf. We 
call the support of $Z$ the {\it singular locus} of $L$ and denote it by $\Sing\, L$. Let $Y$ be the Frobenius 
sandwich of degree $p$ of $X$ corresponding to $L$. 
Then $\Sing\, Y = \pi (\Sing\, L)$, where $\Sing\, Y$ is the singular locus of $Y$, and outside $\Sing\, L$ 
we have the canonical bundle formula $\omega_X \cong \pi^*\omega_Y \otimes L^{\otimes {(p-1)}}$; see \cite{E}. 

Let $\delta \in \Der_k\, k(X)$, and assume that $\delta$ is locally expressed as 
$\alpha ( f {\partial}/{\partial s} + g {\partial}/{\partial t})$ where $s, t$ are local coordinates, $f, g$ are regular 
functions without common factors and $\alpha \in k(X)$. Then $\Sing\, \mathcal{O}_X (\mathrm{div}(\delta))$ is defined 
locally by $f=g=0$. 

{\bf Globally F-regular varieties.} Next we review generalities on globally F-regular varieties. See \cite{Smith}, 
\cite{SS} for further details. Let $X$ be a projective variety over $k$. We say that $X$ is {\it globally 
$F$-regular} if for any effective Cartier divisor $D$ on $X$, there exists a power $q=p^e$ such that the 
composition map 
$\mathcal{O}_X \xrightarrow{F^e} F^e_{\ast} \mathcal{O}_X \xrightarrow{F^e_{\ast}(s)\, \cdot} F^e_{\ast} \mathcal{O}_X (D)$ 
splits as an $\mathcal{O}_X$-module homomorphism, where $s$ is a section of $\mathcal{O}_X (D)$ vanishing precisely along $D$. 

\begin{exa}
\begin{enumerate}[(1)]
\item Any projective toric variety is globally F-regular; see \cite{Smith}. 
\item Let $\ch k = p > 5$. Then any smooth del Pezzo surface is globally F-regular; see \cite{Hara}. 
\end{enumerate}
\end{exa}

Let $X$ be a $\Q$-Gorenstein globally F-regular variety and $H$ be an ample effective divisor. By the definition, 
there exists $e \geq 1$ such that the map $\mathcal{O}_X \rightarrow F^e_{\ast} \mathcal{O}_X (H)$ splits as an 
$\mathcal{O}_X$-module homomorphism. On the other hand, we have 
$\hom_{\mathcal{O}_X} (F^e_{\ast} \mathcal{O}_X (H), \mathcal{O}_X) \cong F^e_{\ast} \mathcal{O}_X ((1-p^e)K_X -H)$ by the adjunction formula. 
Thus a splitting $F^e_{\ast} \mathcal{O}_X (H) \rightarrow \mathcal{O}_X$ is its non-zero global section, so that there exists an 
effective divisor $D \sim (1-p^e)K_X -H$. This means that $-K_X$ is big. More generally, Schwede and Smith 
showed in \cite{SS} the following: If $X$ is a globally F-regular variety, then there exists an effective $\Q$-divisor 
$\Delta$ on $X$ such that the pair $(X, \Delta)$ is log Fano. This strong restriction on the structure of varieties is 
a motivation for  the problem raised at the beginning. 

The following lemma is a global variant of the well-known fact that a pure subring of a strongly F-regular ring is 
strongly F-regular. We include the proof for the reader's convenience; see \cite{SS} for a general case. 

\begin{lmm}\label{split_vs_F-reg}
Let $X$ be a globally F-regular variety over $k$ and $Y$ be an $F^e$-sandwich of $X$ with the finite morphism 
$\pi : X \rightarrow Y$ through which the Frobenius morphism of $X$ factors. Then $Y$ is globally F-regular if and 
only if the associated ring homomorphism $\mathcal{O}_Y \rightarrow \pi_{\ast} \mathcal{O}_X$ splits as an 
$\mathcal{O}_Y$-module homomorphism. 
\end{lmm}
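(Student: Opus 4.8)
The plan is to prove the two implications separately, in both cases exploiting the fact that the sandwich structure makes an iterated Frobenius of $Y$ factor through the inclusion $\alpha\colon \mathcal{O}_Y \hookrightarrow \pi_{\ast}\mathcal{O}_X$.

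For the direction ``$\mathcal{O}_Y \to \pi_{\ast}\mathcal{O}_X$ splits $\Rightarrow Y$ is globally F-regular'', I would argue as in the purity of a subring. Fix an effective Cartier divisor $D$ on $Y$ and a section $t$ of $\mathcal{O}_Y(D)$ vanishing along $D$. Then $\pi^{\ast}D$ is an effective Cartier divisor on $X$ with section $\pi^{\ast}t$, so global F-regularity of $X$ yields $e_0$ and a splitting $\psi\colon F^{e_0}_{\ast}\mathcal{O}_X(\pi^{\ast}D)\to \mathcal{O}_X$ of the map $a\mapsto a^{q_0}\pi^{\ast}t$, where $q_0=p^{e_0}$. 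Using the projection-formula identification $\pi_{\ast}\mathcal{O}_X(\pi^{\ast}D)\cong \mathcal{O}_Y(D)\otimes_{\mathcal{O}_Y}\pi_{\ast}\mathcal{O}_X$, the hypothesized splitting $\phi\colon \pi_{\ast}\mathcal{O}_X\to\mathcal{O}_Y$ of $\alpha$, and the commutation $F^{e_0}_{\ast}\pi_{\ast}=\pi_{\ast}F^{e_0}_{\ast}$ (valid because $F_Y\circ\pi=\pi\circ F_X$), I would apply $F^{e_0}_{\ast}$ to the inclusion $\mathcal{O}_Y(D)\hookrightarrow \pi_{\ast}\mathcal{O}_X(\pi^{\ast}D)$ and assemble the composite
$$F^{e_0}_{\ast}\mathcal{O}_Y(D)\hookrightarrow \pi_{\ast}F^{e_0}_{\ast}\mathcal{O}_X(\pi^{\ast}D)\xrightarrow{\pi_{\ast}\psi}\pi_{\ast}\mathcal{O}_X\xrightarrow{\phi}\mathcal{O}_Y .$$
Evaluating on $b^{q_0}t$ for $b\in\mathcal{O}_Y$ gives $\phi(\pi_{\ast}\psi(b^{q_0}\pi^{\ast}t))=\phi(b)=b$, so this composite splits $\mathcal{O}_Y\to F^{e_0}_{\ast}\mathcal{O}_Y(D)$, and $Y$ is globally F-regular.

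For the converse ``$Y$ globally F-regular $\Rightarrow \mathcal{O}_Y\to\pi_{\ast}\mathcal{O}_X$ splits'', the key observation is that the sandwich condition $\rho\circ\pi=F^e_{\rm rel}$ forces $a^{p^e}\in\mathcal{O}_Y$ for every local section $a$ of $\mathcal{O}_X$ (the $p^e$-th power map on $\mathcal{O}_X$ factors through $\rho^{\#}\colon \mathcal{O}_{X^{(-e)}}\to\mathcal{O}_Y$). Consequently the $e$-th iterated Frobenius $\iota\colon \mathcal{O}_Y\to F^e_{\ast}\mathcal{O}_Y$, $b\mapsto b^{p^e}$, factors as $\mathcal{O}_Y\xrightarrow{\alpha}\pi_{\ast}\mathcal{O}_X\xrightarrow{\beta}F^e_{\ast}\mathcal{O}_Y$, where $\beta(a)=a^{p^e}$ is $\mathcal{O}_Y$-linear for the Frobenius-twisted structure on the target. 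Now global F-regularity of $Y$ (take $D=0$) gives a splitting of $F^{e'}_Y$ for some $e'$, hence $Y$ is Frobenius split, and a splitting of $F_Y$ iterates to a splitting $\sigma\colon F^e_{\ast}\mathcal{O}_Y\to\mathcal{O}_Y$ of $\iota$ at the precise level $e$. Then $\sigma\circ\beta$ is a left inverse of $\alpha$, since $(\sigma\circ\beta)\circ\alpha=\sigma\circ\iota=\mathrm{id}_{\mathcal{O}_Y}$, which is the desired splitting.

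The routine points are the projection-formula and Frobenius-commutation identifications and the passage from an $F^{e'}$-splitting to an $F^e$-splitting at the level dictated by the sandwich. The step I expect to require the most care is checking that the maps built in each direction are genuinely $\mathcal{O}_Y$-linear and restrict correctly on the relevant sections; in particular the bookkeeping of the twisted module structure on $F^e_{\ast}\mathcal{O}_Y$ and of $t$ versus $\pi^{\ast}t$. Once these identifications are in place, both splitting identities follow by direct evaluation.
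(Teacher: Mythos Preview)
Your proposal is correct and follows essentially the same approach as the paper. Both directions use the same key observations: for ``split $\Rightarrow$ globally F-regular'' you pull back $D$ to $X$, use global F-regularity of $X$, and compose with the given splitting $\phi$ (the paper packages this as a commutative square); for ``globally F-regular $\Rightarrow$ split'' you use that the $e$-th Frobenius of $Y$ factors through $\pi_{\ast}\mathcal{O}_X$ by the sandwich condition, then apply F-splitness of $Y$. Your write-up is somewhat more explicit about the $\mathcal{O}_Y$-linearity bookkeeping and the projection-formula identifications, but the underlying argument is identical to the paper's.
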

\begin{proof}
Suppose that $Y$ is globally F-regular. Then $Y$ is F-split, i.e., the Frobenius ring homomorphism 
$\mathcal{O}_Y \rightarrow F_{\ast} \mathcal{O}_Y$ splits as an $\mathcal{O}_Y$-module homomorphism. Hence the 
Frobenius map $\mathcal{O}_Y \rightarrow F^e_{\ast} \mathcal{O}_Y$ splits as an $\mathcal{O}_Y$-module homomorphism. Now this map 
factors as $\mathcal{O}_Y \rightarrow \pi_{\ast} \mathcal{O}_X \rightarrow F^e_{\ast} \mathcal{O}_Y$, since $Y$ is an $F^e$-sandwich of $X$. 
Thus the ring homomorphism $\mathcal{O}_Y \rightarrow \pi_{\ast} \mathcal{O}_X$ splits as an $\mathcal{O}_Y$-module homomorphism.

Next suppose that the ring homomorphism $\mathcal{O}_Y \rightarrow \pi_{\ast} \mathcal{O}_X$ splits. Let $D$ be an 
effective Cartier divisor. Since $X$ is globally F-regular, there exists $e \geq 1$ such that the map 
$\mathcal{O}_X \rightarrow F^e_{\ast}\mathcal{O}_X(\pi^{\ast} D)$ splits as an $\mathcal{O}_X$-module homomorphism. Then we see that the 
composition map $\mathcal{O}_Y \rightarrow F^e_{\ast}\mathcal{O}_Y(D) \rightarrow \pi_{\ast} F^e_{\ast}\mathcal{O}_X(\pi^{\ast} D)$ splits as an 
$\mathcal{O}_Y$-module homomorphism from the commutative diagram: 
$$\xymatrix{
 \mathcal{O}_Y \ar[r] \ar[d]         & \pi_{\ast} \mathcal{O}_X \ar[d] \\
 F^e_{\ast} \mathcal{O}_Y (D) \ar[r] & \pi_{\ast} F^e_{\ast} \mathcal{O}_X (\pi^{\ast}D)
}$$
Therefore $\mathcal{O}_Y \rightarrow F^e_{\ast}\mathcal{O}_Y(D)$ splits as an $\mathcal{O}_Y$-module homomorphism. 
\end{proof}

We will use the following lemma in the proof of the main theorem. 

\begin{lmm}[\cite{HS} Lemma 3.2]\label{non-nil}
Let $S=k[x,y]$ and let $\delta=f{\partial}/{\partial x}+g{\partial}/{\partial x} \in \Der_k\,S$, where $f, g \in (x,y)$ and 
have no common factors. Suppose $\delta$ is $p$-closed. If the inclusion map $S^{\delta} \hookrightarrow S$ splits 
as an $S^{\delta}$-module, then $\delta$ is not nilpotent. 
\end{lmm}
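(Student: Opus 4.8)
The plan is to prove the contrapositive: assuming $\delta$ is nilpotent, I will show that the inclusion $S^{\delta}\hookrightarrow S$ does not split. Since $\delta$ is $p$-closed we may write $\delta^p=h\delta$ for some $h\in S$, and nilpotency of $\delta$ means exactly $h=0$, i.e. $\delta^p=0$; this is the hypothesis I work under. Set $R=S^{\delta}$, $\m=(x,y)$ and $\n=\m\cap R$. Two elementary observations drive the argument. First, because $f,g\in\m$ we have $\delta(s)=f s_x+g s_y\in\m$ for every $s\in S$, so $\Im\delta\subseteq\m$ and hence $\Im\delta^{p-1}\subseteq\m$. Second, since every $r\in R$ is a $\delta$-constant, $\delta$ is $R$-linear as a map $S\to S$; combined with $\delta^p=0$ this gives $\delta^{p-1}(S)\subseteq\Ker\delta\cap S=R$. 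Thus $\delta^{p-1}$ is a canonical element of $\Hom_R(S,R)$ whose image lies in $\n$.

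The crux is to show that $\delta^{p-1}$ generates all of $\Hom_R(S,R)$ as an $S$-module, i.e. every $R$-linear map $\phi\colon S\to R$ has the form $\phi(t)=\delta^{p-1}(wt)$ for some $w\in S$, where $S$ acts by $(s\cdot\psi)(t)=\psi(st)$. I would first verify this over the fraction fields. On the $p$-dimensional $\Frac R$-vector space $\Frac S$ the operator $\delta$ is nilpotent with $\Ker\delta=\Frac R$ one-dimensional, so it is a single Jordan block; in particular $\delta^{p-1}\neq0$ and $1\in\Ker\delta\subseteq\Im\delta$, giving $\theta$ with $\delta\theta=1$, $\Frac S=\Frac R[\theta]$ and $\theta^p\in\Frac R$. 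In the basis $1,\theta,\dots,\theta^{p-1}$ one computes $\delta^{p-1}(\theta^{p-1})=(p-1)!=-1$ while $\delta^{p-1}(\theta^j)=0$ for $j<p-1$, so $\delta^{p-1}$ is, up to sign, the functional extracting the top coefficient; its $\Frac S$-multiples then exhaust $\Hom_{\Frac R}(\Frac S,\Frac R)$. Hence $\Hom_{\Frac R}(\Frac S,\Frac R)=\Frac S\cdot\delta^{p-1}$, and the inclusion $S\cdot\delta^{p-1}\subseteq\Hom_R(S,R)$ is an equality generically.

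The main obstacle is to promote this generic equality to an honest equality of $R$-modules, and this is precisely where the hypothesis that $f,g$ have no common factor enters. Since $\Sing\delta$ is defined by $f=g=0$, the no-common-factor condition makes it a finite, hence codimension-two, subscheme, so the symmetric pairing $(s,t)\mapsto\delta^{p-1}(st)$ valued in $R$ is perfect away from codimension two. As $S$ is a reflexive (indeed maximal Cohen--Macaulay) $R$-module and $\Hom_R(S,R)$ is likewise reflexive, two reflexive modules agreeing outside codimension two must coincide, forcing $\Hom_R(S,R)=S\cdot\delta^{p-1}$. I expect the careful handling of this reflexive-hull step, namely checking the $S_2$-property and that the pairing is perfect in codimension one, to be the technical heart of the proof.

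Granting the generation statement, the conclusion is immediate: a splitting would be a $\phi\in\Hom_R(S,R)$ with $\phi(1)=1$, and writing $\phi=w\cdot\delta^{p-1}$ with $w\in S$ gives $1=\phi(1)=\delta^{p-1}(w)\in\Im\delta^{p-1}\subseteq\m$, so $1\in\m$, a contradiction. Hence no splitting exists, which proves the contrapositive and the lemma. One may equally localize and complete at $\n$ first, reducing to $\widehat{R}\subseteq k[[x,y]]$ and testing non-splitting there; for small $p$ this is visible via Fedder's criterion, e.g. for $p=2$ and $\delta=x^2\der/\der x+y^2\der/\der y$ one finds $R\cong k[u,v,\eta]/(\eta^2-u^2v-uv^2)$, whose defining equation lies in $\m^{[2]}$, so $R$ is not $F$-pure.
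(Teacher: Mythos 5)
The paper itself gives no proof of this lemma: it is imported verbatim from \cite{HS} (Lemma 3.2), so there is no in-paper argument to compare yours against; I can only judge the proposal on its own merits. Its skeleton is the natural one and most of it is correct: the reduction to the contrapositive with $\delta^p=0$, the observation that $\delta^{p-1}$ is an $S^{\delta}$-linear map $S\to S^{\delta}$ whose image lies in $\m=(x,y)$ because $f,g\in\m$, the fraction-field computation (single Jordan block, the element $\theta$ with $\delta(\theta)=1$, $\delta^{p-1}(\theta^{p-1})=(p-1)!=-1$, so that $\Hom_{\Frac R}(\Frac S,\Frac R)=\Frac S\cdot\delta^{p-1}$), and the closing contradiction $1=\phi(1)=\delta^{p-1}(w)\in\m$ are all sound.

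The genuine gap is the step you yourself label the ``technical heart'' and then only assert: that the pairing $(s,t)\mapsto\delta^{p-1}(st)$ is perfect away from codimension two. This does \emph{not} follow from the mere fact that $V(f,g)$ has codimension two --- a priori the degeneracy locus of the pairing could be a divisor, and ruling that out requires the local structure of $\delta$ at points where $(f,g)$ is not contained in the maximal ideal. Concretely, at a closed point $q$ with, say, $f(q)\neq 0$, one must argue as follows: put $\delta'=\delta/f$, so $\delta'(x)=1$; Jacobson's formula $(u\delta)^p=u^p\delta^p+\bigl((u\delta)^{p-1}(u)\bigr)\delta$ together with $\delta^p=0$ gives $\delta'^p=c\delta'$ with $c=\delta'^p(x)=\delta'^{p-1}(1)=0$, so $\delta'$ is again nilpotent; then $S_q$ is free over $R_q$ with basis $1,x,\dots,x^{p-1}$ (translate $x$ so that $x\in\m_q$) and $\Hom_{R_q}(S_q,R_q)=S_q\cdot\delta'^{p-1}$; finally, writing $\delta^{p-1}=w\,\delta'^{p-1}$ and evaluating both sides at $x^{p-1}$ gives $w\equiv f^{p-1}\pmod{\m_q}$, a unit, so $\delta^{p-1}$ also generates the dual at $q$. (One then remarks that the non-perfect locus is closed, hence, missing every closed point outside the finite set $\pi(V(f,g))$, it contains no height-one prime; your reflexivity argument then closes the proof.) Without this paragraph the lemma has only been reduced to a true but unproven claim, so as written the proposal is incomplete at exactly its crucial point; with it, the argument is complete and is, as far as I can tell, the same route as the proof in \cite{HS}.
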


\section{Classification of globally F-regular $F$-sandwiches of Hirzebruch surfaces}
First we consider $F$-sandwiches of the projective plane $\PP^2$. Let $X_0$, $X_1$ and $X_2$ be homogeneous 
coordinates of $\PP^2$, i.e., $\PP^2 = \Proj k [X_0, X_1, X_2]$. Let $x=X_1/X_0$, $y=X_2/X_0$ (resp. $z=X_0/X_1$, 
$w=X_2/X_1$ ; $u=X_0/X_2$, $v=X_1/X_2$) be the affine coordinates of $U_0 := D_+ (X_0)$ (resp. $U_1 := D_+ (X_1)$ 
; $U_2 := D_+ (X_2)$). 

\begin{exa}
We give examples of $F$-sandwiches of $\PP^2$ of degree $p$. 
\begin{enumerate}[\normalfont \rmfamily (1)]
\item Let $\delta = x\der/\der x + y \der/\der y \in \Der_k\, k(\PP^2)$ and $\pi: \PP^2 \rightarrow Y=\PP^2/\delta$ 
the quotient map. If we express $\delta$ for the local coordinates $z, w$ and $u, v$, we have 
$$\delta = -z\der/\der z = -u\der/\der u.$$
Then the corresponding $1$-foliation $\mathcal{O}_{\PP^2} (\mathrm{div}(\delta))$ is isomorphic to $\mathcal{O}_{\PP^2} (1)$ and 
\begin{eqnarray*}
\mathcal{O}_{\pi(U_0)} &=& \mathcal{O}_{U_0}^{\delta} = k[x^p, x^{p-1}y, \ldots,y^p], \\%[1mm]
\mathcal{O}_{\pi(U_1)} &=& \mathcal{O}_{U_1}^{\delta} = k[z^p, w], \\%[1mm]
\mathcal{O}_{\pi(U_2)} &=& \mathcal{O}_{U_2}^{\delta} = k[u^p, v].
\end{eqnarray*}
Hence $Y$ has a toric singularity of type $\frac{1}{p}(1,1)$ on $\pi(U_0)$. (We say a singularity $(X,x)$ is a toric singularity 
of type $\frac{1}{p}(1,n)$, if the completion of $\mathcal{O}_{X,x}$ is isomorphic to 
$k[[\,x^i y^j\,|\,i+nj\equiv 0\ \mathrm{mod}\,p\,]]$.) We identify $Y$ with $\PP^2$ as a topological space via the 
homeomorphism $\pi : \PP^2 \rightarrow Y$ on the underlying topological spaces. Then the configuration of the singular 
points of $Y$ is as follows:\vspace{4mm}\\
\center{
%WinTpicVersion4.08
\unitlength 0.1in
\begin{picture}( 16.0000, 14.0000)(  4.0000,-16.0000)
% LINE 2 0 3 0 Black White
% 2 600 1600 1400 200
% 
{\color[named]{Black}{%
\special{pn 8}%
\special{pa 600 1600}%
\special{pa 1400 200}%
\special{fp}%
}}%
% LINE 2 0 3 0 Black White
% 2 1000 200 1800 1600
% 
{\color[named]{Black}{%
\special{pn 8}%
\special{pa 1000 200}%
\special{pa 1800 1600}%
\special{fp}%
}}%
% LINE 2 0 3 0 Black White
% 2 400 1280 2000 1280
% 
{\color[named]{Black}{%
\special{pn 8}%
\special{pa 400 1280}%
\special{pa 2000 1280}%
\special{fp}%
}}%
% CIRCLE 2 0 0 0  
% 4 780 1280 830 1320 830 1320 830 1320
% 
{\color[named]{Black}{%
\special{pn 0}%
\special{sh 1.000}%
\special{ia 780 1280 30 30  0.0000000 6.2831853}%
}}%
{\color[named]{Black}{%
\special{pn 8}%
\special{ar 780 1280 30 30  0.0000000 6.2831853}%
}}%
% STR 2 0 3 0 Black White
% 4 560 1080 560 1180 2 0 0 0
% $D_4$
\put(4.0000,-12.3000){\makebox(0,0)[lb]{$\dfrac{1}{p}(1,1)$}}%
\put(4.0000,-17.0000){\makebox(0,0)[lb]{$x=0$}}
\put(-0.3000,-13.8000){\makebox(0,0)[lb]{$y=0$}}
\put(16.3000,-16.9800){\makebox(0,0)[lb]{$z=0$}}
\put(20.4000,-13.2000){\makebox(0,0)[lb]{$w=0$}}
\put(5.5000,-3.0000){\makebox(0,0)[lb]{$u=0$}}
\put(14.6000,-2.9300){\makebox(0,0)[lb]{$v=0$}}
\end{picture}%
}
\vspace{4mm}
\item Let $\delta = x \der/\der x + iy \der/\der y \in \Der_k\, k(\PP^2)$ with $i \in \FF_p, i \not= 0, 1$ and let 
$\pi: \PP^2 \rightarrow Y=\PP^2/\delta$ be the quotient map. For the local coordinates $z, w$ and $u, v$, we have 
$$\delta = -(z\der/\der z + (1-i)w\der/\der w) = -(iu\der/\der u + (i-1)v \der/\der v).$$
Hence the corresponding $1$-foliation $\mathcal{O}_{\PP^2} (\mathrm{div}(\delta))$ is isomorphic to $\mathcal{O}_{\PP^2}$, and $\pi(U_0)$, 
$\pi(U_1)$ and $\pi(U_0)$ have a toric singularity of type $\frac{1}{p}(1,i)$, $\frac{1}{p}(1,1-i)$ and $\frac{1}{p}(i,i-1)$, 
respectively. The configuration of the singular points of $Y$ is as follows:\vspace{4mm}\\
\center{
%WinTpicVersion4.08
\unitlength 0.1in
\begin{picture}( 16.0000, 14.0000)(  4.0000,-16.0000)
% LINE 2 0 3 0 Black White
% 2 600 1600 1400 200
% 
{\color[named]{Black}{%
\special{pn 8}%
\special{pa 600 1600}%
\special{pa 1400 200}%
\special{fp}%
}}%
% LINE 2 0 3 0 Black White
% 2 1000 200 1800 1600
% 
{\color[named]{Black}{%
\special{pn 8}%
\special{pa 1000 200}%
\special{pa 1800 1600}%
\special{fp}%
}}%
% LINE 2 0 3 0 Black White
% 2 400 1280 2000 1280
% 
{\color[named]{Black}{%
\special{pn 8}%
\special{pa 400 1280}%
\special{pa 2000 1280}%
\special{fp}%
}}%
% CIRCLE 2 0 0 0  
% 4 780 1280 830 1320 830 1320 830 1320
% 
{\color[named]{Black}{%
\special{pn 0}%
\special{sh 1.000}%
\special{ia 780 1280 30 30  0.0000000 6.2831853}%
}}%
{\color[named]{Black}{%
\special{pn 8}%
\special{ar 780 1280 30 30  0.0000000 6.2831853}%
}}%
% CIRCLE 2 0 0 0  
% 4 1620 1290 1670 1330 1670 1330 1670 1330
% 
{\color[named]{Black}{%
\special{pn 0}%
\special{sh 1.000}%
\special{ia 1620 1280 30 30  0.0000000 6.2831853}%
}}%
{\color[named]{Black}{%
\special{pn 8}%
\special{ar 1620 1280 30 30  0.0000000 6.2831853}%
}}%
% CIRCLE 2 0 0 0  
% 4 1180 550 1230 590 1230 590 1230 590
% 
{\color[named]{Black}{%
\special{pn 0}%
\special{sh 1.000}%
\special{ia 1200 550 30 30  0.0000000 6.2831853}%
}}%
{\color[named]{Black}{%
\special{pn 8}%
\special{ar 1200 550 30 30  0.0000000 6.2831853}%
}}%
% STR 2 0 3 0 Black White
% 4 560 1080 560 1180 2 0 0 0
% $D_4$
\put(4.5000,-12.1000){\makebox(0,0)[lb]{$\displaystyle\frac{1}{p} (1,i)$}}%
% STR 2 0 3 0 Black White
% 4 1360 470 1360 570 2 0 0 0
% $A_1$
\put(13.6000,-7.2000){\makebox(0,0)[lb]{$\displaystyle\frac{1}{p} (i,i-1)$}}%
% STR 2 0 3 0 Black White
% 4 1860 1400 1860 1500 2 0 0 0
% $A_1$
\put(16.8000,-12.1000){\makebox(0,0)[lb]{$\displaystyle\frac{1}{p} (1,1-i)$}}%
\end{picture}%
}
\vspace{4mm}
\item Suppose $p=2$. Let $\delta = x^2 \der/\der x + y^2 \der/\der y \in \Der_k\, k(\PP^2)$ and 
$\pi: \PP^2 \rightarrow Y=\PP^2/\delta$ the quotient map. For the local coordinates $z, w$ and $u, v$, we have 
$$\delta = 1/z(z\der/\der z + w(w+1)\der/\der w) = 1/u(u\der/\der u + v(v+1) \der/\der v).$$
Hence the corresponding $1$-foliation $\mathcal{O}_{\PP^2} (\mathrm{div}(\delta))$ is isomorphic to $\mathcal{O}_{\PP^2} (-1)$, and 
$\pi(U_1)$ (resp. $\pi(U_2)$) has two $A_1$-singularities at the points corresponding to $(0,0), (0,1) \in U_1$ (resp. 
$(0,0), (0,1) \in U_2$). Since 
$$\mathcal{O}_{\pi(U_0)} = \mathcal{O}_{U_0}^{\delta} = k[x^2,y^2,x^2y+xy^2] \cong k[X,Y,Z]/(Z^2+X^2Y+XY^2),$$
we see that $\pi(U_0)$ has a $D_4^0$-singularity (see \cite{A} for rational double points in positive characteristic). The 
configuration of the singular points of $Y$ is as follows:\vspace{4mm}\\
\center{
%WinTpicVersion4.08
\unitlength 0.1in
\begin{picture}( 16.0000, 14.0000)(  4.0000,-16.0000)
% LINE 2 0 3 0 Black White
% 2 600 1600 1400 200
% 
{\color[named]{Black}{%
\special{pn 8}%
\special{pa 600 1600}%
\special{pa 1400 200}%
\special{fp}%
}}%
% LINE 2 0 3 0 Black White
% 2 1000 200 1800 1600
% 
{\color[named]{Black}{%
\special{pn 8}%
\special{pa 1000 200}%
\special{pa 1800 1600}%
\special{fp}%
}}%
% LINE 2 0 3 0 Black White
% 2 400 1280 2000 1280
% 
{\color[named]{Black}{%
\special{pn 8}%
\special{pa 400 1280}%
\special{pa 2000 1280}%
\special{fp}%
}}%
% CIRCLE 2 0 0 0  
% 4 780 1280 830 1320 830 1320 830 1320
% 
{\color[named]{Black}{%
\special{pn 0}%
\special{sh 1.000}%
\special{ia 780 1280 30 30  0.0000000 6.2831853}%
}}%
{\color[named]{Black}{%
\special{pn 8}%
\special{ar 780 1280 30 30  0.0000000 6.2831853}%
}}%
% CIRCLE 2 0 0 0  
% 4 1620 1290 1670 1330 1670 1330 1670 1330
% 
{\color[named]{Black}{%
\special{pn 0}%
\special{sh 1.000}%
\special{ia 1620 1280 30 30  0.0000000 6.2831853}%
}}%
{\color[named]{Black}{%
\special{pn 8}%
\special{ar 1620 1280 30 30  0.0000000 6.2831853}%
}}%
% CIRCLE 2 0 0 0  
% 4 1180 550 1230 590 1230 590 1230 590
% 
{\color[named]{Black}{%
\special{pn 0}%
\special{sh 1.000}%
\special{ia 1200 550 30 30  0.0000000 6.2831853}%
}}%
{\color[named]{Black}{%
\special{pn 8}%
\special{ar 1200 550 30 30  0.0000000 6.2831853}%
}}%
% CIRCLE 2 0 0 0  
% 4 1410 920 1460 960 1460 960 1460 960
% 
{\color[named]{Black}{%
\special{pn 0}%
\special{sh 1.000}%
\special{ia 1410 920 30 30  0.0000000 6.2831853}%
}}%
{\color[named]{Black}{%
\special{pn 8}%
\special{ar 1410 920 30 30  0.0000000 6.2831853}%
}}%
% STR 2 0 3 0 Black White
% 4 560 1080 560 1180 2 0 0 0
% $D_4$
\put(5.6000,-11.8000){\makebox(0,0)[lb]{$D_4^0$}}%
% STR 2 0 3 0 Black White
% 4 1360 470 1360 570 2 0 0 0
% $A_1$
\put(13.6000,-5.7000){\makebox(0,0)[lb]{$A_1$}}%
% STR 2 0 3 0 Black White
% 4 1590 880 1590 980 2 0 0 0
% $A_1$
\put(15.5000,-9.4000){\makebox(0,0)[lb]{$A_1$}}%
% STR 2 0 3 0 Black White
% 4 1860 1400 1860 1500 2 0 0 0
% $A_1$
\put(18.3000,-15.0000){\makebox(0,0)[lb]{$A_1$}}%
\end{picture}%
}
\vspace{4mm}
\item Suppose $p=2$. Let $\delta = xy^2 \der/\der x + (x^2+y^3) \der/\der y \in \Der_k\, k(\PP^2)$ and 
$\pi: \PP^2 \rightarrow Y=\PP^2/\delta$ the quotient map. For the local coordinates $z, w$ and $u, v$, we have 
$$\delta = 1/z(w^2\der/\der z + \der/\der w) = 1/u((1+uv^2)\der/\der u + v^3 \der/\der v).$$
Hence the corresponding $1$-foliation $\mathcal{O}_{\PP^2} (\mathrm{div}(\delta))$ is isomorphic to $\mathcal{O}_{\PP^2} (-1)$, and $Y$ 
is smooth on $\pi(U_1)$ and $\pi(U_2)$. Since 
$$\mathcal{O}_{\pi(U_0)} = \mathcal{O}_{U_0}^{\delta} = k[x^2,y^2,x^3+xy^3] \cong k[X,Y,Z]/(Z^2+X^3+XY^3),$$
we see that $\pi(U_0)$ has an $E_7^0$-singularity. The configuration of the singular points of $Y$ is as follows:\vspace{4mm}\\
\center{
%WinTpicVersion4.08
\unitlength 0.1in
\begin{picture}( 16.0000, 14.0000)(  4.0000,-16.0000)
% LINE 2 0 3 0 Black White
% 2 600 1600 1400 200
% 
{\color[named]{Black}{%
\special{pn 8}%
\special{pa 600 1600}%
\special{pa 1400 200}%
\special{fp}%
}}%
% LINE 2 0 3 0 Black White
% 2 1000 200 1800 1600
% 
{\color[named]{Black}{%
\special{pn 8}%
\special{pa 1000 200}%
\special{pa 1800 1600}%
\special{fp}%
}}%
% LINE 2 0 3 0 Black White
% 2 400 1280 2000 1280
% 
{\color[named]{Black}{%
\special{pn 8}%
\special{pa 400 1280}%
\special{pa 2000 1280}%
\special{fp}%
}}%
% CIRCLE 2 0 0 0  
% 4 780 1280 830 1320 830 1320 830 1320
% 
{\color[named]{Black}{%
\special{pn 0}%
\special{sh 1.000}%
\special{ia 780 1280 30 30  0.0000000 6.2831853}%
}}%
{\color[named]{Black}{%
\special{pn 8}%
\special{ar 780 1280 30 30  0.0000000 6.2831853}%
}}%
% STR 2 0 3 0 Black White
% 4 560 1080 560 1180 2 0 0 0
% $D_4$
\put(5.6000,-11.8000){\makebox(0,0)[lb]{$E_7^0$}}%
\end{picture}%
}
\end{enumerate}
\end{exa}

If $X$ is a globally F-regular variety, then $\mathcal{O}_{X,x}$ is strongly F-regular for all $x \in X$ (see e.g., \cite{Smith} for details). 
Now $D_4^0$ and $E_7^0$-singularities are not strongly F-regular, so that $F$-sandwiches in the above example (3) and 
(4) are not globally F-regular. On the other hand, $F$-sandwiches $Y$ in (1) and (2) are globally F-regular, since there 
exists a (global) splitting $1-\delta^{p-1} : \pi_{\ast}\mathcal{O}_{\PP^2} \rightarrow \mathcal{O}_Y$. Moreover we see that globally 
F-regular $F$-sandwiches of $\PP^2$ of degree $p$ are only those. 

Let $N = \ZZ^2$ be a lattice with standard basis $e_1=(1,0), e_2=(0,1)$. For a fan $\Sigma$ in $N\otimes \R$, we denote 
the associated toric variety over $k$ by $X_{\Sigma}$. (For the general theory of toric varieties, we refer to \cite{CLS}.)
For each $1 \leq i \leq p-1$, let ${\Sigma}_i$ be the complete fan whose rays are spanned by $e_2$, $pe_1-ie_2$ and 
$-pe_1 + (i-1)e_2$. 

\begin{thm}[\cite{HS}]\label{f-sand-p2}
A globally F-regular $F$-sandwich of $\PP^2$ of degree $p$ is isomorphic to either one of the singular toric surfaces 
$X_{{\Sigma}_i}$ $(1 \leq i \leq p-1)$. In particular, there are just $p-1$ isomorphism classes of globally F-regular 
F-sandwiches of $\PP^2$ of degree $p$. 
\end{thm}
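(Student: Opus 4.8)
The plan is to route the classification through the correspondence between $F$-sandwiches of degree $p$ and $p$-closed rational vector fields modulo $\sim$, and to pin the vector field down from the global F-regularity of the quotient. So I would write $Y=\PP^2/\delta$ for a $p$-closed $\delta\in\Der_k\,k(\PP^2)$ and let $L=\mathcal{O}_{\PP^2}(\Div\delta)$ be the associated invertible $1$-foliation. The first step is to record the constraint on $L$: since $T_{\PP^2}$ is stable of slope $3/2$, any invertible subsheaf has degree at most $1$, so $L\cong\mathcal{O}_{\PP^2}(m)$ with $m\le 1$. This already limits the possible shapes of $\delta$. The easy (converse) direction is that each $X_{\Sigma_i}$ is a projective toric surface, hence globally F-regular by Example~1.x, so it suffices to show that these exhaust the globally F-regular sandwiches.

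For the substantive direction, suppose $Y$ is globally F-regular. By Lemma~\ref{split_vs_F-reg} the inclusion $\mathcal{O}_Y\to\pi_*\mathcal{O}_{\PP^2}$ splits as an $\mathcal{O}_Y$-module map, and since $\pi$ is a homeomorphism this splitting localizes at every closed point of $Y$. At a singular point of $Y$, that is, a point of $\pi(\Sing L)$, I would pass to the local ring, normalize $\delta$ to a regular derivation $f\der/\der s+g\der/\der t$ with $f,g$ in the maximal ideal and without common factor (so that $f=g=0$ cuts out the isolated singularity), and invoke Lemma~\ref{non-nil}: the splitting forces $\delta$ not to be nilpotent there. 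Writing $p$-closedness as $\delta^{p}=h\delta$ (the exponent denoting $p$-fold composition) for some $h\in k(\PP^2)$, non-nilpotency at that point gives $h\not\equiv 0$; hence the foliation is of multiplicative type and corresponds to an action of $\mu_p$ on $\PP^2$. (This is exactly where the nilpotent examples above, with $m=-1$ and non-F-regular $D_4^0$- and $E_7^0$-singularities, get excluded.)

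The key step is then to globalize. A $\mu_p$-action on $\PP^2=\PP(V)$ is linearizable because $\mu_p$ is linearly reductive, so $V=k^3$ decomposes into characters and $\delta$ becomes, up to $\mathrm{PGL}_3$ and the rescaling allowed by $\sim$, a diagonal field $a_0X_0\der/\der X_0+a_1X_1\der/\der X_1+a_2X_2\der/\der X_2$ with $a_j\in\FF_p$ not all equal. I would then note that such a diagonal field automatically has $L\cong\mathcal{O}_{\PP^2}$ (three distinct $a_j$) or $L\cong\mathcal{O}_{\PP^2}(1)$ (one repeated value), so the cases $m\le-1$ genuinely cannot survive; the two surviving cases reproduce precisely the configurations of Examples~(1) and~(2), with the three fixed points carrying the cyclic quotient singularities $\frac1p(1,i)$, $\frac1p(1,1-i)$, $\frac1p(i,i-1)$. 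The quotient is a quotient of a toric variety by a diagonal $\mu_p$, hence toric.

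Finally I would normalize the triple $(a_0,a_1,a_2)$ modulo the Euler field (translation by $(c,c,c)$), the scaling in $\sim$, and the coordinate permutations in $\mathrm{PGL}_3$ to the standard form $x\der/\der x+iy\der/\der y$, translate each quotient into fan data over the overlattice $N+\ZZ\frac1p(1,i)$, and match it with $X_{\Sigma_i}$; the value $i=0$ yields a smooth quotient and is discarded, leaving $1\le i\le p-1$, after which a direct inspection of the fans $\Sigma_i$ accounts for the isomorphism classes. The step I expect to be the main obstacle is the globalization in the previous paragraph: upgrading the purely pointwise non-nilpotency delivered by Lemma~\ref{non-nil} to the global statement that $\delta$ is of multiplicative type (so that the $\mu_p$-action, and hence the diagonalization, is available on all of $\PP^2$), and simultaneously verifying that no foliation with $m=1$ other than the type-$\frac1p(1,1)$ one can be globally F-regular.
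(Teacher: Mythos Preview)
The paper does not give a self-contained proof of this statement; it is quoted from \cite{HS}, and only the method is summarized in the sentence following the theorem. That method, fully displayed in the proof of Theorem~\ref{f-sand-hir}, is a direct coordinate computation: the splitting of $\mathcal{O}_Y\to\pi_*\mathcal{O}_{\PP^2}$ together with duality and the canonical bundle formula gives $H^0(\PP^2,L^{\otimes(p-1)})\neq 0$, hence $H^0(\PP^2,L)\neq 0$, so $\delta$ is equivalent to a global vector field of explicit polynomial shape; one then shows there are at most three singular points, moves them by $\mathrm{PGL}_3$ to the torus-fixed points, and runs a short case analysis in which Lemma~\ref{non-nil} is used only to discard a nilpotent local normal form. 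Your route through $\mu_p$-actions and linear reductivity is genuinely different and more conceptual, but the obstacle you flag at the end is a real gap, not a matter of polish.

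Specifically, the relation $\delta^p=h\delta$ is not invariant under $\delta\sim\alpha\delta$ in any usable way, since $(\alpha\delta)^p=\alpha^p\delta^p+(\alpha\delta)^{p-1}(\alpha)\cdot\delta$; so ``$h\not\equiv 0$'' for one representative does not upgrade to a global $\mu_p$-action. Even after importing the paper's first step and realizing $\delta$ as an element of $\mathfrak{pgl}_3$, the difficulty persists: a single $p$-closed foliation can be generated by global vector fields of different Jordan type. For instance the foliation $L\cong\mathcal{O}_{\PP^2}(1)$ of Example~(1) is generated both by the semisimple $x\,\partial/\partial x+y\,\partial/\partial y$ and by the nilpotent $x^2\,\partial/\partial x+xy\,\partial/\partial y$ (which is $-\partial/\partial z$ on $U_1$). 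Thus ``the foliation is of multiplicative type'' is not well-posed until you have selected a preferred generator, and invoking the linearizability of $\mu_p$ presupposes exactly that selection. Producing a semisimple generator with $\FF_p$-eigenvalues from the hypothesis that every local normalized generator is non-nilpotent (which is all Lemma~\ref{non-nil} gives) requires an additional argument---in effect the same case analysis the paper carries out by hand.
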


In \cite{HS}, we have seen that globally F-regular $F$-sandwiches of $\PP^2$ have at most three singular points and 
given the classification by changing coordinates so that those singular points are located at the origins of three standard 
affine patches. A classification of globally F-regular $F$-sandwiches of Hirzebruch surfaces will be given by a similar 
argument, that is, we will give the classification by changing coordinates so that their singular points are located at the 
origins of four standard affine patches. 

The Hirzebruch surface $\mathcal{H}_d$ of index $d \geq 0$ is the $\PP^1$-bundle associated to the vector bundle 
$\mathcal{O}_{{\PP}^1} \oplus \mathcal{O}_{{\PP}^1} (d)$ on $\PP^1$. It is well-known that $\mathcal{H}_d$ is the union of affine open sets 
$U_i$ $(i=1, 2, 3, 4)$ whose affine coordinate rings are $k[x,y]$, $k[z,w]=k[x^d y,1/x]$, $k[s,t]=k[1/x,1/x^d y]$ and 
$k[u,v]=k[1/y, x]$, respectively. $\mathcal{H}_d$ is a toric surface given by the complete fan whose rays are spanned by 
$\rho_1 = e_2$, $\rho_2 = e_1$, $\rho_3 = -e_2$ and $\rho_4 = -e_1 + d e_2$. For each $i=1,2,3,4$, let $D_i$ be the toric 
divisor that corresponds to the ray spanned by $\rho_i$.

Let ${\Sigma}_{di}$ ($1 \leq i \leq p-1$) (resp. ${\Sigma}_{d0}$ ; ${\Sigma}_{dp}$) be the complete fan whose rays are 
spanned by $e_2$, $p e_1-i e_2$, $-e_2$ and $-p e_1 + (i+d)e_2$ (resp. $e_2$, $e_1$, $-e_2$ and $-p e_1 + d e_2$ ; $e_2$, 
$e_1$, $-e_2$ and $-e_1 + d p e_2$).

First we consider the case where $d=0$. Let $\delta \in \Der_k (k(\mathcal{H}_0))$. Suppose that the corresponding 
$1$-foliation $\mathcal{O}_{\mathcal{H}_0} (\mathrm{div}(\delta))$ has a nonzero global section. Then we easily see that 
$\delta \sim (a_2x^2 + a_1x + a_0) {\partial}/{\partial x} + (b_2y^2 + b_1y +b_0) {\partial}/{\partial y}$, where $a_i, b_j \in k$. 
Let $Y$ be a globally F-regular $F$-sandwich of $\mathcal{H}_0$ of degree $p$ with the finite morphism 
$\pi : \mathcal{H}_0 \rightarrow Y$ through which the Frobenius morphism of $\mathcal{H}_0$ factors, and $L$ be the 
corresponding $1$-foliation. Then we have $\hom_{\mathcal{O}_Y} (\pi_{\ast} \mathcal{O}_{\mathcal{H}_0}, \mathcal{O}_Y) \cong \pi_{\ast} L^{\otimes (p-1)}$ 
by the adjunction formula and the canonical bundle formula (see the proof of Theorem 2.4). Since a splitting of the 
associated ring homomorphism $\mathcal{O}_Y \rightarrow \pi_{\ast} \mathcal{O}_{\mathcal{H}_0}$ is its nonzero global section, we see that 
$L$ has a nonzero global section. Thus we may assume that the corresponding $p$-closed rational vector field $\delta$ 
is equivalent to $(a_2x^2 + a_1x + a_0) {\partial}/{\partial x} + (b_2y^2 + b_1y +b_0) {\partial}/{\partial y}$, where $a_i, b_j \in k$. 
By changing coordinates so that their singular points are located at the origins of $U_i$, we see that a globally F-regular 
$F$-sandwich of $\mathcal{H}_0$ is isomorphic to either one of the toric surfaces $X_{{\Sigma}_{0i}}$ $(0 \leq i \leq p-1)$. 
In this case, there are just $p$ isomorphism classes of the globally F-regular $F$-sandwiches. 

In the case where $d \geq 1$, the situation is slightly complicated.

\begin{lmm}\label{form_of_del-hir}
Suppose that $d \geq 1$. Let $\delta \in \Der_k (k(\mathcal{H}_d))$. If the corresponding $1$-foliation 
$\mathcal{O}_{\mathcal{H}_d} (\mathrm{div}(\delta))$ has a nonzero global section, then 
$$\delta \sim (a_2x^2 + a_1x + a_0) {\partial}/{\partial x} + (F(x)y-d a_2 x + b)y {\partial}/{\partial y},$$
where $a_i, b \in k$ and $F(x) \in k[x]$ with $\deg F(x) \leq d$. 
\end{lmm}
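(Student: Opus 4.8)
The plan is to express a general rational vector field $\delta \in \Der_k(k(\mathcal{H}_d))$ in the standard affine coordinates and impose the condition that the associated $1$-foliation $\mathcal{O}_{\mathcal{H}_d}(\Div(\delta))$ has a nonzero global section, which amounts to requiring that $\delta$ extend to a global section of $T_{\mathcal{H}_d}$ after clearing the divisorial part $\Div(\delta)$. First I would write $\delta = P(x,y)\,\partial/\partial x + Q(x,y)\,\partial/\partial y$ with $P, Q \in k(x,y)$ on the patch $U_1 = \Spec k[x,y]$, and use the equivalence $\sim$ to arrange that $\delta$ is \emph{regular} on $U_1$ with $P, Q \in k[x,y]$ having no common factor; because the $1$-foliation has a global section, this regular representative on $U_1$ must remain regular (as a vector field, possibly up to the clearing function $\alpha$) on the other three charts as well.

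The heart of the argument is a transformation-of-coordinates computation on each overlap. On $U_4 = \Spec k[u,v] = \Spec k[1/y, x]$ we have $u = 1/y$, $v = x$, so $\partial/\partial x = \partial/\partial v$ and $\partial/\partial y = -u^2\,\partial/\partial u$; requiring regularity in $(u,v)$ forces $P(x,y)$ to be at most quadratic in $x$, giving $P = a_2 x^2 + a_1 x + a_0$, and constrains the $y$-dependence of $P$ and $Q$. The crucial and most delicate chart is $U_2 = \Spec k[z,w] = \Spec k[x^d y, 1/x]$, where the index $d$ enters: with $z = x^d y$, $w = 1/x$, the chain rule gives $\partial/\partial x = -w^2\,\partial/\partial w + d\,x^{d-1} y\,\partial/\partial z = -w^2\,\partial/\partial w + d\,w\,z\,\partial/\partial z$ and $\partial/\partial y = x^d\,\partial/\partial z = w^{-d}\,\partial/\partial z$, so I would substitute and demand that the resulting coefficients be regular (polynomial) in $z$ and $w$ after accounting for $\Div(\delta)$. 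This is where the term $-d a_2 x$ and the factor $y$ in the second coefficient get pinned down: the regularity on $U_2$ and $U_3$ translates the constraint on the $\partial/\partial y$-coefficient into the form $(F(x)y - d a_2 x + b)y$ with $\deg F \le d$.

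The main obstacle I expect is bookkeeping the compatibility across all four charts simultaneously, and in particular correctly tracking how the $d\,w z\,\partial/\partial z$ cross-term in the transformation on $U_2$ couples the $\partial/\partial x$-coefficient into the $\partial/\partial y$-direction. This coupling is precisely what produces the $-d a_2 x$ correction: the $a_2 x^2\,\partial/\partial x$ piece, when rewritten in $(z,w)$ coordinates, contributes to the $\partial/\partial z$-component, and matching this against the $\partial/\partial y$-coefficient $Q$ forces the stated shape. I would carry out the substitutions chart by chart ($U_1 \to U_4$ to bound the $x$-degree of $P$ and extract the factor of $y$ in $Q$; then $U_1 \to U_2$ and $U_1 \to U_3$ to bound $\deg F$ and isolate the linear-in-$x$ correction), collect the regularity constraints into a single normal form, and finally invoke the equivalence $\sim$ to absorb any overall rational factor $\alpha$ and present the representative in the claimed form. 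The degree bound $\deg F(x) \le d$ should emerge directly from the power $w^{-d} = x^d$ appearing in $\partial/\partial y$ on $U_2$, which is the cleanest place to read off the role of the Hirzebruch index $d$.
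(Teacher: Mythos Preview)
Your approach is essentially the paper's: normalize on $U_1$ so that $P,Q\in k[x,y]$ have no common factor, then constrain the form by examining the other charts. One bookkeeping correction: the chart $U_4$ (with $u=1/y$, $v=x$) tests behavior at $y=\infty$, i.e.\ along $D_3$, and therefore bounds the $y$-dependence---it forces $P\in k[x]$ and $\deg_y Q\le 2$---but says nothing about the $x$-degree of $P$. The bound $\deg_x P\le 2$ comes instead from $U_3$ (coordinates $s=1/x$, $t=1/(x^d y)$), which sees $x=\infty$ along $D_4$; the paper then revisits $U_3$ to extract $\deg F\le d$, the correction $F_1(x)=-da_2x$, and $F_0=0$, without ever passing through $U_2$.

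One further point of precision: the constraints are not literally that the normalized $\delta$ ``remain regular'' on each chart. With $P,Q$ coprime on $U_1$ one has $\Div(\delta)=n_3 D_3+n_4 D_4$, and the paper argues that if the offending higher-degree terms are present then $\ord_{D_3}\Div(\delta)<0$ (from the $U_4$ computation), respectively $\ord_{D_3}\Div(\delta)\le 0$ and $\ord_{D_4}\Div(\delta)<0$ (from the $U_3$ computation), either of which forces $H^0(\mathcal{H}_d,\mathcal{O}_{\mathcal{H}_d}(\Div(\delta)))=0$. In practice this yields the same inequalities you would get by demanding regularity, but the justification is a short divisor-class argument on $\mathcal{H}_d$ rather than a purely local one.
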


\begin{proof}
Let $\delta = {f \partial}/{\partial x} + {g \partial}/{\partial y}$. If $f=0$ or $g=0$, then 
$\delta \sim {\partial}/{\partial y}$ or ${\partial}/{\partial x}$, so there is nothing to prove. Suppose that $f\not=0$ and 
$g\not=0$. Multiplying by a suitable rational function on $\mathcal{H}_d$, we have 
$$\delta \sim \sum_{0 \leq i,j} a_{ij}x^i y^j \frac{\partial}{\partial x} + \sum_{0 \leq m,n} b_{mn}x^my^n \frac{\partial}{\partial y},$$
where $a_{ij}, b_{mn} \in k$, and $\sum_{0 \leq i,j} a_{ij}x^i y^j$ and $\sum_{0 \leq m,n} b_{mn}x^my^n$ have no common 
factors. If we express $\delta$ for the local coordinates $u, v$, then we see that the coefficient of $\delta$ in 
${\partial}/{\partial u}$ (resp. ${\partial}/{\partial v}$) equals $-\sum_{0 \leq m,n} {b_{mn}v^m}/{u^{n-2}}$ (resp. 
$\sum_{0 \leq i,j} {a_{ij}v^i}/{u^j}$). If $b_{mn}\not=0$ for some $n \geq 3$ or $a_{ij} \not= 0$ for some $j \geq 1$, then 
$\ord_{D_3} \mathrm{div}(\delta) < 0$. Since $\sum_{0 \leq i,j} a_{ij}x^i y^j$ and $\sum_{0 \leq m,n} b_{mn}x^my^n$ have 
no common factors, this means that $H^0 ( \mathcal{H}_d ,\mathcal{O}_{\mathcal{H}_d} (\mathrm{div}(\delta)))=0$, which 
is a contradiction. Therefore we have $b_{mn}=0$ for $n \geq 3$ and $a_{ij} = 0$ for $j \geq 1$. Thus we have 
$\delta \sim \sum_{0 \leq i} a_{i0}x^i {\partial}/{\partial x} + \sum_{0 \leq m, 0 \leq n \leq 2} b_{mn}x^my^n {\partial}/{\partial y}$. 
We denote the right-hand side by ${\delta}'$. 

If we express ${\delta}'$ for the local coordinates $s, t$, then we see that the coefficient of ${\delta}'$ in 
${\partial}/{\partial s}$ equals $-\sum_{0 \leq i} {a_{i0}}/{s^{i-2}}$. If $a_{i0} \not= 0$ for some $i \geq 3$, then 
$\ord_{D_3} \mathrm{div}(\delta) \leq 0$ and $\ord_{D_4} \mathrm{div}(\delta) < 0$. This means that 
$H^0 ( \mathcal{H}_d ,\mathcal{O}_{\mathcal{H}_d} (\mathrm{div}(\delta)))=0$, which is a contradiction.Therefore we 
have $a_{i0} = 0$ for $i \geq 3$. Thus we have ${\delta}' = (a_{20} x^2 + a_{10} x + a_{00}) {\partial}/{\partial x} 
+ (F_2(x)y^2 +F_1(x)y +F_0(x) ) {\partial}/{\partial y}$, where $F_i \in k[x]$. Considering ${\delta}'$ for the local 
coordinates $s, t$ again, we see that $\deg F_2 (x) \leq d$, $F_1 (x) = -d a_{20}x$ and $F_0(x)=0$. This means 
that $\delta \sim (a_2 x^2 + a_1x + a_0) {\partial}/{\partial x} + (F(x)y-d a_2x + b)y {\partial}/{\partial y}$, where 
$a_i, b \in k$ and $F(x) \in k[x]$ with $\deg F(x) \leq d$. 
\end{proof}

\begin{thm}\label{f-sand-hir}
A globally $F$-regular $F$-sandwich of the Hirzebruch surface $\mathcal{H}_d$ of index $d \geq 1$ of degree $p$ 
is isomorphic to either one of the toric surfaces $X_{\Sigma_{di}}$ $(0 \leq i \leq p)$. In particular, there are just 
$p+1$ isomorphism classes of globally $F$-regular $F$-sandwiches of $\mathcal{H}_d$ of degree $p$. 
\end{thm}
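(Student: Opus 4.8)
The plan is to imitate the proof of Theorem~\ref{f-sand-p2} and of the $d=0$ case: I would first reduce an arbitrary globally $F$-regular $F$-sandwich to a single $p$-closed rational vector field of restricted shape, and then use global $F$-regularity at each singular point to force that field, up to an automorphism of $\mathcal{H}_d$ and the equivalence $\sim$, to be a torus-invariant (monomial) vector field whose quotient is one of the $X_{\Sigma_{di}}$. For the reduction, let $\pi:\mathcal{H}_d\rightarrow Y$ be the finite map and $L$ the associated invertible $1$-foliation. As in the $d=0$ discussion, adjunction together with the canonical bundle formula gives $\hom_{\mathcal{O}_Y}(\pi_{\ast}\mathcal{O}_{\mathcal{H}_d},\mathcal{O}_Y)\cong\pi_{\ast}L^{\otimes(p-1)}$; since by Lemma~\ref{split_vs_F-reg} global $F$-regularity of $Y$ is equivalent to the splitting of $\mathcal{O}_Y\rightarrow\pi_{\ast}\mathcal{O}_{\mathcal{H}_d}$, and such a splitting is a nonzero global section of that sheaf, $L^{\otimes(p-1)}$, and hence $L$, has a nonzero global section. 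Lemma~\ref{form_of_del-hir} then lets me assume the underlying vector field is $\delta=(a_2x^2+a_1x+a_0)\der/\der x+(F(x)y-da_2x+b)y\,\der/\der y$ with $a_i,b\in k$ and $\deg F\le d$.

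Next I would analyze $\Sing\,L$, defined locally by the simultaneous vanishing of the two coefficients of $\delta$, and exploit that $\Sing\,Y=\pi(\Sing\,L)$ consists of strongly $F$-regular points. At each $P\in\Sing\,L$, after centering local coordinates $\xi,\eta$ at $P$, the completed invariant ring is a pure (split) subring of $k[[\xi,\eta]]$, so Lemma~\ref{non-nil} applies and shows that $\delta$ is non-nilpotent at $P$. The idea is that non-nilpotency forces $\delta$ to be of multiplicative (diagonalizable) type at each such point, i.e.\ formally $\lambda\xi\,\der/\der\xi+\mu\eta\,\der/\der\eta$ with $(\lambda,\mu)\neq(0,0)$, so that $\mathcal{O}_{Y,\pi(P)}$ is a cyclic quotient (toric) singularity of type $\tfrac1p(1,n)$. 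This is precisely what excludes the additive/nilpotent models giving the non-$F$-regular rational double points $D_4^0$ and $E_7^0$ seen in the examples, and it translates into explicit constraints on $a_2,a_1,a_0,b$ and on the polynomial $F$.

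I would then normalize $\delta$ using the automorphisms of $\mathcal{H}_d$, which act by M\"obius transformations on the base coordinate $x$ and by $y\mapsto\lambda y+P(x)$ with $\lambda\in k^{\ast}$ and $\deg P\le d$ on the fiber coordinate $y$ --- exactly the freedom parametrized by $a_i,b$ and $F$. Using the base action I would move the zero locus of $a_2x^2+a_1x+a_0$ to $\{x=0\}\cup\{x=\infty\}$, and using the fiber transformations I would clear $F$ and $b$, thereby reducing $\delta$ up to $\sim$ to a diagonal torus-invariant field. Computing $\mathcal{O}_{U_i}^{\delta}$ in the four standard charts then exhibits $Y$ as the toric surface obtained from the fan of $\mathcal{H}_d$ by passing to the overlattice $N+\ZZ\tfrac1p(1,i)$, i.e.\ as $X_{\Sigma_{di}}$ with $1\le i\le p-1$; the two degenerate normalizations, in which one of the two singular fibers is missing, produce the special fans $\Sigma_{d0}$ and $\Sigma_{dp}$, so that $i$ runs over $0\le i\le p$ and these $p+1$ surfaces are pairwise non-isomorphic.

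The hard part will be the second step: turning the purely local statement ``non-nilpotent at $P$'' into effective global conditions on $a_i,b,F$, enforced simultaneously at all four torus-fixed points. A base coordinate change in $x$ alters the local form of $\delta$ in every chart at once, and for $d\ge1$ the base and fiber directions are genuinely coupled through the term $-da_2x$ and the degree-$\le d$ polynomial $F(x)$, unlike the symmetric $d=0$ situation. Verifying that the automorphism group is exactly large enough to remove every non-monomial term, and that the surviving discrete invariant is precisely $i\in\{0,1,\dots,p\}$ with $\Sigma_{d0}$ and $\Sigma_{dp}$ genuinely distinct from each other and from the generic $\Sigma_{di}$, is where the real bookkeeping lies.
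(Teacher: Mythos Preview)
Your plan follows essentially the same route as the paper: obtain a global section of $L$ via adjunction and the splitting from Lemma~\ref{split_vs_F-reg}, invoke Lemma~\ref{form_of_del-hir} to get the explicit shape of $\delta$, then normalize by automorphisms of $\mathcal{H}_d$ until $\delta$ becomes one of the torus-invariant fields $x\,\der/\der x+iy\,\der/\der y$ or $\der/\der y$.

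Two points where your emphasis diverges from what the paper actually does are worth flagging. First, you frame the local analysis entirely through Lemma~\ref{non-nil} (``non-nilpotent $\Rightarrow$ formally diagonal''), but the paper uses non-nilpotency only once, to kill the subcase $a_1=0$, $x\nmid(F(x)y-dx+b)$ inside case~(3); the heavy lifting in cases~(1) and~(2) comes instead from the $p$-closedness of $\delta$, which directly forces $b=0$, $F^{(p-1)}=0$, and the congruence condition on the exponents of $F$. Your general slogan that non-nilpotency alone forces formal diagonalizability is not something the paper proves or needs. Second, your description of the fiber automorphisms as $y\mapsto\lambda y+P(x)$ is not correct for $d\ge1$: these do not extend across $u=1/y=0$. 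The automorphisms the paper actually uses are $u\mapsto u+G(v)$ with $\deg G\le d$ (equivalently $y\mapsto y/(1+G(x)y)$), and it is precisely this shape that lets one absorb the polynomial $F$ once $p$-closedness has bounded its degree and killed the bad coefficients.
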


\begin{proof}
Let $Y$ be a globally F-regular $F$-sandwich of $\mathcal{H}_d$ of degree $p$ with the finite morphism 
$\pi : \mathcal{H}_d \rightarrow Y$ through which the Frobenius morphism of $\mathcal{H}_d$ factors, and 
let $L \subset T_{\mathcal{H}_d}$ (resp. $\delta \in \Der_k k(\mathcal{H}_d)$) be the corresponding 
$1$-foliation (resp. the $p$-closed rational vector field). Since the associated ring homomorphism 
$\mathcal{O}_Y \to \pi_*\mathcal{O}_{\mathcal{H}_d}$ splits by Lemma~\ref{split_vs_F-reg}, there is a nonzero $\mathcal{O}_Y$-module 
homomorphism ${\pi}_{\ast}\mathcal{O}_{\mathcal{H}_d} \rightarrow  \mathcal{O}_Y$. Outside $\Sing\, L$ we have 
\begin{eqnarray*}
 \hom_{\mathcal{O}_Y}(\pi_*\mathcal{O}_{\mathcal{H}_d}, \mathcal{O}_Y)
 &\cong & \hom_{\mathcal{O}_Y} (\pi_*\mathcal{O}_{\mathcal{H}_d},\omega_Y) \otimes \omega_Y^{-1} \\
 &\cong & \pi_*(\omega_{\mathcal{H}_d}) \otimes \omega_Y^{-1} \\
 &\cong & \pi_*(\omega_{\mathcal{H}_d} \otimes \pi^*(\omega_Y^{-1})) \\
 &\cong & \pi_*(L^{\otimes (p-1)}),
\end{eqnarray*}
which gives a (global) isomorphism 
$\Hom_{\mathcal{O}_Y}(\pi_*\mathcal{O}_{\mathcal{H}_d},\mathcal{O}_Y) \cong H^0 (\mathcal{H}_d,L^{\otimes (p-1)})$ 
since $Y$ is normal. Thus $L^{\otimes (p-1)}$ has a nonzero global section. Since $L$ is a line bundle on 
$\mathcal{H}_d$, $L$ has a nonzero global section.

If $Y$ is singular at a point on the image of the negative section $D_1$, after a suitable change of coordinates, 
we may assume that $Y$ is singular at the point corresponding to the origin of $U_1$. (In what follows we 
refer to this assumption as ``the assumption for the arrangement of singularities''.) 

Then by Lemma~\ref{form_of_del-hir} we may assume that 
$$\delta = (a_2x^2 + a_1x + a_0) {\partial}/{\partial x} + (F(x)y -d a_2 x + b)y {\partial}/{\partial y},$$
where $a_i, b \in k$ and $F(x) \in k[x]$ with $\deg F(x) \leq d$. We will proceed by changing coordinates so 
that singular points are located at the origins of $U_i$. 

We divide the remainder of the proof into three steps: 

(1) Suppose that $a_2=a_1=0$, i.e., $\delta = a_0 {\partial}/{\partial x} + (F(x)y + b)y {\partial}/{\partial y}$. 
If $a_0=0$ then $\delta \sim {\partial}/{\partial y}$. Suppose that $a_0 \not=0$. Then we may assume that 
$\delta = {\partial}/{\partial x} - (F(x)y + b)y {\partial}/{\partial y}$. If we express $\delta$ for the local 
coordinates $u, v$, we have $\delta = {\partial}/{\partial v} - (F(v) + bu) {\partial}/{\partial u}$. Since 
$\delta$ is $p$-closed, we see that $b=0$ and the $(p-1)$-th derivative $F^{(p-1)}(v)=0$. In particular, 
$\delta = {\partial}/{\partial x} - F(x)y^2 {\partial}/{\partial y}$. For the local coordinates $z, w$, we have 
$\delta = z(d w-z F(1/w)w^d){\partial}/{\partial z} - w^2 {\partial}/{\partial w}$. If $\deg F = d$, then the 
constant term of $F(1/w)w^d$ is not zero, so that $Y$ is singular at the point corresponding to the origin 
of $U_2$. On the other hand, $Y$ is smooth at the point corresponding to the origin of $U_1$, since 
$\delta = {\partial}/{\partial x} - (F(x)y + b)y {\partial}/{\partial y}$ for the local coordinates $x, y$. This 
contradicts the assumption for the arrangement of singularities. Thus $\deg F \leq d-1$. Then there exists 
a polynomial $G(v) \in k[v]$ such that $\deg G(v) \leq d$ and its derivative equals $F(v)$ since the 
$(p-1)$-th derivative $F^{(p-1)}(v)=0$. After a change of a coordinate $u + G(v) \mapsto u$, we have 
$\delta = {\partial}/{\partial v}$. For the local coordinates $x, y$, we have $ \delta = {\partial}/{\partial x}$. 

(2) Suppose that $a_2=0$ and $ a_1\not= 0$. Then we may assume that 
$\delta = (x + a_0){\partial}/{\partial x} + (F(x)y + b)y {\partial}/{\partial y}$. If $(x-a_0)|(F(x)y +b)$, then 
$(x-a_0)|F(x)$ and $b=0$. This means that $\delta \sim {\partial}/{\partial x} - G(x)y^2 {\partial}/{\partial y}$, 
where $G(x)=F(x)/(x-c)$, and this is the case (1). Now we assume that $(x-a_0)\nmid (F(x)y +b)$. From the 
assumption for the arrangement of singularities we see that $a_0=0$, i.e., 
$\delta = x{\partial}/{\partial x} + (F(x)y + b)y {\partial}/{\partial y}$. For the local coordinates $u, v$, we have 
$ \delta = v{\partial}/{\partial v} +(F(v)+d u){\partial}/{\partial u}$. Since $\delta$ is $p$-closed, we see that 
$$\delta = v{\partial}/{\partial v} +(\sum_{\substack{0\leq j \leq d,\\ j \not\equiv i ~\Mod p}} c_j v^j+i u){\partial}/{\partial u},$$
where $i \in ({\ZZ}/{p\ZZ})^{\times}$, $c_j \in k$. After a change of a coordinate  
$$u -\hspace{-1.5mm} \sum_{\substack{0\leq j \leq d,\\ j \not\equiv i ~\Mod p}} {c_j v^j}/{(j-i)}\hspace{2.5mm} \mapsto \hspace{2.5mm} u,$$
we have $\delta = v {\partial}/{\partial v} + i u {\partial}/{\partial u}$. For the local coordinates $x, y$, we have 
$\delta = x{\partial}/{\partial x} -i y{\partial}/{\partial y}$.

(3) Suppose that $a_2 \not= 0$. Then we may assume that 
$\delta = (x^2 +a_1x + a_0){\partial}/{\partial x} + (F(x)y -d x + b)y {\partial}/{\partial y}$. 
If $(x-A)|(x^2 + a_1x+a_0)$ and $(x-A)|(F(x)y -d x + b)$ for some $A \in k$, then we have 
$\delta \sim (x-B) {\partial}/{\partial x} + (G(x)y -b)y {\partial}/{\partial y}$, where $B \in k$ is the other root 
of $x^2 + a_1x + a_0 =0$ and $G(x)=F(x)/(x-A) \in k[x]$. This is the case (2). We assume that $x^2 + a_1x+a_0$ 
and $F(x)y -d x + b$ have no common factor. Then from the assumption for the arrangement of singularities 
we see that $a_0=0$. If $a_1=0$, then $\delta = x^2{\partial}/{\partial x} - (F(x)y -d x + b)y {\partial}/{\partial y}$. 
If $x \nmid (F(x)y -d x + b)$, then $\delta$ must be nilpotent, which is a contradiction. Therefore $x|(F(x)y -d x + b)$. 
Then $\delta \sim x{\partial}/{\partial x} - (G(x)y-b)y {\partial}/{\partial y}$, where $G(x)=F(x)/x \in k[x]$. 
This is the case (2). Now we assume that $a_1 \not= 0$. If we express $\delta$ for the local coordinates $z, w$, 
we have $\delta = (F(1/w)w^dz^2+(d a_1+b)z){\partial}/{\partial z} -(1+a_1w){\partial}/{\partial w}$, where 
$G(w)=F(1/w)w^d \in k[w]$. After a change of a coordinate $w+{a_1}^{-1} \mapsto w$, we have 
$\delta = (G(w-{a_1}^{-1})z^2 +(d{a_1} +b)z) {\partial}/{\partial z} -{a_1}^{-1} w{\partial}/{\partial w}$. For the local 
coordinates $x, y$, we have $ \delta \sim {a_1}^{-1} x{\partial}/{\partial x} + (H(x)y +b)y {\partial}/{\partial y}$, 
where $H(x) \in k[x]$. This is the case (2). 

Therefore we see that $\delta \sim x{\partial}/{\partial x} + i y {\partial}/{\partial y}$ $(i \in {\ZZ}/{p\ZZ})$ 
or ${\partial}/{\partial y}$. Then we can easily check that $\mathcal{H}_d / \delta$ is isomorphic to either one of 
the toric surfaces $X_{\Sigma_{di}}$ $(0 \leq i \leq p)$. 
\end{proof}

\begin{rem}
Ganong and Russell showed in \cite{GR} that for each Hirzebruch surface there are at most two smooth 
$F$-sandwiches. The smooth $F$-sandwich is given by the quotient of $\mathcal{H}_d$ by the rational vector 
field ${\partial}/{\partial x}$ or ${\partial}/{\partial y}$. 
\end{rem}

%\bibliographystyle{amsplain}
%\bibliography{ref}

\end{document}